\newcommand{\blind}{0}
\DeclareMathAlphabet{\mathantt}{OT1}{antt}{m}{n}
\newcommand*{\dist}{\ensuremath{\rho}\xspace} 
\newcommand*{\rand}[1]{\ensuremath{\mathbf{#1}}\xspace}
\newcommand*{\funcf}{\ensuremath{\mathantt{f}}\xspace}
\newcommand*{\acc}{\ensuremath{\mathantt{a}}\xspace} 
\newcommand*{\pcc}{\ensuremath{\mathantt{p}}\xspace}
\newcommand*{\prob}[1]{\ensuremath{\mathbb{P}\left(#1\right)}\xspace}
\newcommand{\ind}[1]{\raisebox{-0.3mm}{\scalebox{1.2}{\ensuremath{\mathds{1}}}}\ensuremath{\{ #1\}}}
\newcommand*{\sub}{\ensuremath{\subseteq}}
\newcommand*{\limi}[1]{\ensuremath{\lim\limits_{#1\rightarrow \infty}}} 
\newcommand*{\limii}[2]{\ensuremath{\ \overset{#1\rightarrow #2}{\longrightarrow}\ }}
\newcommand*{\states}{\ensuremath{\mathcal{S}}\xspace}
\newcommand*{\trans}{\ensuremath{\kappa}\xspace}
\newcommand*{\mcal}[1]{\ensuremath{\mathcal{#1}}\xspace}
\newcommand*{\todo}[1]{}
\newcommand*{\defeq}{\mathrel{\rlap{\raisebox{0.3ex}{$\m@th\cdot$}}\raisebox{-0.3ex}{$\m@th\cdot$}}=}
\newtheorem{theorem}{Theorem}{\bf}{\rm}
{\bf}{\rm}
{\bf}{\rm}
{\bf}{\rm}
{\bf}{\rm}
\begin{document}

\def\spacingset#1{\renewcommand{\baselinestretch}%
{#1}\small\normalsize} \spacingset{1}


\if0\blind
{  
  \title{\bf Markov Chain Monte Carlo on Finite State Spaces}
  \author{Tobias Siems\\
    Department of Mathematics and Computer Science\\
			University of Greifswald\\\\
	\underline{\textbf{Accepted for publication with minor corrections in}}\\
	\underline{\textbf{The Mathematical Gazette issue July 2020}}}
  \maketitle
} \fi

\if1\blind
{
  \begin{center}
    {\LARGE\bf Title}
\end{center}
} \fi

\spacingset{1.45} 
\sloppy
\begin{abstract}
We elaborate the idea behind Markov chain Monte Carlo (MCMC) methods in a mathematically coherent, yet simple and understandable way.
To this end, we proof a pivotal convergence theorem for finite Markov chains and a minimal version of the Perron-Frobenius theorem.
Subsequently, we briefly discuss two fundamental MCMC methods, the Gibbs and Metropolis-Hastings sampler.
Only very basic knowledge about matrices, convergence of real sequences and probability theory is required.
\end{abstract}

\noindent%
{\it Keywords: Metropolis-Hastings, Gibbs sampling, Convergence of Markov chains, Perron-Frobenius theorem}  
\vfill

\newpage
\section{Introduction}
\label{chap:intro}


MCMC techniques aim at drawing samples from prespecified distributions. 
They do so in an indirect, approximate fashion through Markov chains.
This is important since the distributions deployed in practice are often too complex to be dealt with directly or even unavailable in closed form.

There exists a tremendous number of scientific articles and books about MCMC.
See for example \cite{bishop2014} for a vivid and more comprehensive introduction without mathematical proofs.

Let $\pi$ be a distribution over a finite state space $\states$ and $\pi_s\in[0,1]$ the probability of state $s\in\states$ under $\pi$, whereby $\sum_{s\in\states}\pi_s=1$.
By virtue of the strong law of large numbers, independent samples from $\pi$ (so-called \emph{i.i.d. samples}) can be used universally to approximate expectations w.r.t. $\pi$.
Thus, for a set of such samples, say $x_1,...,x_m\in\states$, and an arbitrary function $\funcf:\states\rightarrow \mathbb{R}^\ell$, we get that $\frac{1}{m}\sum_{i=1}^m \funcf(x_i)\approx\sum_{s\in\states}\pi_s\funcf(s)$.
This simple recipe poses one of the most powerful tools in statistics.

An example for $\funcf$ is the indicator function $\ind{s\in A}$ for events $A\sub\states$.
It is one if the condition in the brackets is true and zero otherwise.
Its expectation yields the probability of $A$.


A \emph{Markov chain} over $\states$ is defined through an (arbitrary) initial state $s_0\in \states$ and a \emph{transition kernel} $\trans$.
$\trans$ is a non-negative function over $\states\times\states$ such that $\sum_{s\in \states}\trans_{z s}=1$ for all $z\in\states$.
$\trans_{z}$ can be interpreted as a conditional distribution.

The Markov chain starts in state $s_0$ and evolves according to $\trans$ in an iterative fashion:
The distribution of the first link in the chain is $\trans_{s_0 }$ and given the first link, say $s_1$, the distribution of the second link is $\trans_{s_1 }$ and so forth. 
This results in a potentially infinite sequence of random variables $\rand X=(\rand X_0, \rand X_1,\rand X_2,...)$, whereby $\rand X_n$ represents the $n$-th link in the chain.
Consequently, we get that $\prob{\rand X_n=s\mid \rand X_{n-1}=z}=\trans_{zs}$ for $n>0$ and $\prob{\rand X_0=s_0}=1$.

Later on, we will deal with the (unconditional) distributions of the $n$-th links.
To this end, w.l.o.g. we assume that $\states=\{1,...,k\}$.
Therewith, we describe $\trans$ as a matrix in $[0,1]^{k\times k}$ and $\pi$ as a column vector in $[0,1]^k$.
Any quadratic matrix with non-negative entries and rows that sum to one is called \emph{stochastic matrix}.
Therewith, $\prob{\rand X_n=s}=(\trans^n)_{s_0s}$, i.e. the $n$-fold matrix product of $\trans$ evaluated at $s_0$ and $s$.
A further generalization is to set the law of $\rand X_0$ to an arbitrary distribution $\pi$. 
This yields $\prob{\rand X_n=s}=(\pi^t\trans^n)_{s}$, whereby the $t$ indicates transposition.

We say that a distribution $\pi$ is an \emph{invariant distribution} of $\trans$ if $\pi^t\trans=\pi^t$. 
Thus, transitioning according to $\trans$ doesn't affect $\pi$.
Once a link in a Markov chain with transition kernel $\trans$ follows the law $\pi$, all subsequent links will do likewise.
In this case, the chain is considered to be in \emph{equilibrium}.
Equilibrium can be enforced by setting the distribution of $\rand X_0$ to $\pi$, but it may also be reached (approximately) in the long run through convergence.

The foundation of MCMC sampling is that under some circumstances the distributions of the $n$-th links of a Markov chain converge towards an invariant distribution regardless of the initial state. 
Thus, by simulating such a chain until equilibrium is reached sufficiently, we may obtain an approximate sample of this invariant distribution.
On these grounds, MCMC methods provide schemes to build Markov chains with a predefined unique invariant distribution.

The Gibbs sampler \citep{Geman1984} builds a Markov chain with invariant distribution $\pi$ by decomposing $\pi$ into simpler conditional versions.
This facilitates sampling of complex joint distributions, but is somewhat restricted in its ability to explore the state space.

The well-known Metropolis-Hastings algorithm \citep{metropolis1953, hastings1970} is capable of incorporating user defined proposal distributions, which
enables the exploration of the state space in any desired fashion.
It further facilitates the application of complex statistical models to observed data.

\subsection{Convergence towards and existence of invariant distributions}

At first, we consider the convergence of the distributions of the $n$-th links of certain Markov chains.
This convergence forms the very basis of MCMC sampling.
At second, we provide a version of the Perron-Frobenius theorem, which gives further insights into the existence of invariant distributions.
The following two fundamental properties of $\trans$ are imposed by these theorems.

$\trans$ is called \emph{irreducible} if for every $z,s\in\states$ there exists an $n>0$ such that $(\trans^n)_{zs}>0$.
Thus, regardless of the state the Markov chain starts in, every state can eventually be reached with positive probability.

$\trans$ is called \emph{aperiodic} if there exists an $N$ such that $(\trans^n)_{zs}>0$ for all $n>N$ and all $z,s\in\states$.
Thus, regardless of the state the Markov chain starts in, in the long run, it can always reach any state immediately with positive probability.

The name irreducible suggests that the Markov chain does not divide $\states$ into separate, mutually inaccessible classes.
In turn, aperiodicity excludes the case where parts of $\states$ can only be reached in a periodic fashion,
for example only through either an even or odd number of transitions.
Aperiodicity implies irreducibility, but not the other way around.

It is obvious that a periodic behavior may impede convergence of the distributions of the $n$-th links.
For a general convergence theorem, aperiodicity is thus a necessary condition.
We will now see that it is also sufficient.
The following theorem is a simplified version of a convergence theorem for Markov chains over countable state spaces provided inter alia by \cite{konig2005stochastische}.

\begin{theorem}
	\label{lem:invdisc}
	For an aperiodic stochastic matrix $\trans$ with invariant distribution $\pi$, we get that $\limi{n}(\trans^n)_{zs}= \pi_s$ for all $z,s\in\states$.
\end{theorem}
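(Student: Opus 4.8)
The plan is to track, for a fixed target state $s\in\states$, how the column of transition probabilities into $s$ flattens out under repeated application of $\trans$. Write $v^{(n)}_z \defeq (\trans^n)_{zs}$; the recursion $(\trans^{n+1})_{zs}=\sum_{w}\trans_{zw}(\trans^n)_{ws}$ says precisely that every entry of $v^{(n+1)}$ is a convex combination, weighted by the stochastic row $\trans_z$, of the entries of $v^{(n)}$. Setting $\overline{v}_n\defeq\max_z v^{(n)}_z$ and $\underline{v}_n\defeq\min_z v^{(n)}_z$, this convex-combination structure immediately forces $\underline{v}_n\le\underline{v}_{n+1}$ and $\overline{v}_{n+1}\le\overline{v}_n$, so both sequences are monotone and bounded in $[0,1]$, hence convergent. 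The crux is to show they share a common limit.

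First I would use aperiodicity to extract a uniform positivity constant. By definition there is an $N$ with $(\trans^n)_{zs}>0$ for all $n>N$ and all $z,s$; fixing $M\defeq N+1$, every entry of $P\defeq\trans^{M}$ is strictly positive, so $\delta\defeq\min_{z,s}P_{zs}>0$. As a product of stochastic matrices $P$ is again stochastic, so each of its $k$-entry rows sums to one, giving $k\delta\le1$ and hence $\delta\le\tfrac12$ (the case $k=1$ being trivial).

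The main step is a contraction estimate over blocks of $M$ steps. Writing $v^{(n+M)}=Pv^{(n)}$ and isolating, in each row of $P$, the mass $\ge\delta$ that necessarily sits on an index where $v^{(n)}$ attains its minimum (for the upper bound) or its maximum (for the lower bound), I would derive $v^{(n+M)}_a\le\overline{v}_n-\delta(\overline{v}_n-\underline{v}_n)$ and $v^{(n+M)}_a\ge\underline{v}_n+\delta(\overline{v}_n-\underline{v}_n)$ for every state $a$. Subtracting the two bounds yields $\overline{v}_{n+M}-\underline{v}_{n+M}\le(1-2\delta)(\overline{v}_n-\underline{v}_n)$ with $0\le1-2\delta<1$. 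Iterating along the subsequence $n=jM$ shows $\overline{v}_{jM}-\underline{v}_{jM}\to0$, and since the full monotone sequences sandwich this gap, both $\underline{v}_n$ and $\overline{v}_n$ converge to one common value $L_s$; equivalently $(\trans^n)_{zs}\to L_s$, uniformly in $z$.

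Finally I would identify $L_s$ with $\pi_s$ via invariance. Since $\pi^t\trans=\pi^t$ gives $\pi^t\trans^n=\pi^t$ for every $n$, we have $\sum_z\pi_z(\trans^n)_{zs}=\pi_s$ for all $n$; letting $n\to\infty$ and using $\sum_z\pi_z=1$ yields $L_s=\pi_s$, as claimed. I expect the contraction estimate to be the one genuinely delicate point---arranging the bookkeeping of the $\delta$-mass so that a single $\delta$ simultaneously controls the upper and lower bounds---while the monotonicity observations and the closing invariance argument are routine.
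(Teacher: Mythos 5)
Your proof is correct, but it follows a genuinely different route from the paper. The paper argues probabilistically by coupling: it runs an independent copy $\rand Z$ of the chain started in the invariant distribution $\pi$ alongside the chain $\rand X$ started at $s_0$, uses aperiodicity to show the first meeting time $\rand T$ has geometrically decaying tail $\prob{nN<\rand T}\leq(1-\epsilon)^n$, splices the two chains at $\rand T$ to build a third chain $\rand Y$ with the same law as $\rand X$, and bounds $|(\trans^n)_{s_0s}-\pi_s|$ by $\prob{n\leq \rand T}$. You instead give the classical Markov--Doeblin contraction (ergodic coefficient) argument: the column $v^{(n)}_z=(\trans^n)_{zs}$ evolves by convex combinations, so $\max_z v^{(n)}_z$ decreases and $\min_z v^{(n)}_z$ increases, and the uniform minorization $\delta=\min_{z,s}(\trans^{M})_{zs}>0$ supplied by aperiodicity forces the oscillation to shrink by a factor $1-2\delta$ every $M$ steps; invariance then pins the common limit to $\pi_s$. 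Your contraction estimate is sound (the upper bound isolates the $\delta$-mass on an argmin index, the lower bound on an argmax index, and when these coincide the oscillation is already zero), and the sandwiching of the monotone sequences along the subsequence $n=jM$ is exactly what is needed. What each approach buys: yours is purely linear-algebraic, needs no auxiliary random variables, yields an explicit geometric rate $(1-2\delta)^{\lfloor n/M\rfloor}$ for the worst-case error uniformly in the starting state, and in fact establishes convergence of $(\trans^n)_{zs}$ to a $z$-independent limit \emph{before} invoking the invariant distribution (so it could even be used to prove existence of $\pi$ in the aperiodic case); the paper's coupling argument is the one that survives the passage to countable or general state spaces, where a uniform $\delta$ over all pairs of states typically does not exist, and it showcases a technique central to modern bounds on mixing times.
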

\begin{proof}
	Assume that $\rand X=(\rand X_0, \rand X_1, ...)$ is a Markov chain with transition kernel $\trans$ that starts with an arbitrary but fixed state $s_0$.
	Furthermore, consider the Markov chain $\rand{Z}=(\rand Z_0, \rand Z_1,...)$ with transition kernel $\trans$ and initial distribution $\pi$, i.e. $\prob{\rand Z_0=s}=\pi_s$ for all $s\in\states$.
	$\rand X$ and $\rand Z$ are supposed to  be independent of each other.

	Let $\rand T$ be the random variable that represents the first $n$ where $\rand X$ and $\rand Z$ equal, i.e. $\rand T=\min\{n\mid \rand X_n=\rand Z_n\}$.
	We want to show that $\rand T$ is finite with probability one.
	Due to the aperiodicity of $\trans$, we may choose an $N$ such that $\trans^N$ has solely positive entries. 
	Let $\epsilon>0$ be the smallest entry of $\trans^N$ and consider that
	\begin{align*}
	&\prob{nN< \rand T}=\prob{\rand X_{i}\not=\rand Z_{i}\text{ for all } i\leq nN}\leq	\prob{\rand X_{i\cdot N}\not=\rand Z_{i\cdot N}\text{ for all } i\leq n}\\
	&=\sum_{z_0\not=s_0}\pi_{z_0}\sum_{s_1\in\states}\trans^N_{s_0s_1}\sum_{z_1\not=s_1}\trans^N_{z_0z_1}...
	\sum_{s_n\in\states}\trans^N_{s_{n-1}s_n}\underbrace{\sum_{z_n\not=s_n}\trans^N_{z_{n-1}z_n}}_{\leq 1-\epsilon}\leq (1-\epsilon)^n\overset{n\rightarrow \infty}{\longrightarrow}\ 0
	\end{align*}

	Define further $\rand Y_n=\ind{n\leq \rand T}\rand X_n+\ind{n>\rand T}\rand Z_n$.
	The Markov chain $\rand Y=(\rand Y_0, \rand Y_1, ...)$ starts with copying $\rand X$ and switches to $\rand Z$ as soon as both equal the first time.
	We are interested in the distribution of $\rand Y$.
	To this end, consider an arbitrary path $s_{1:n}\in\states^{n}$, define $\pcc_{s_{j:i}}=\prod_{\ell=j}^{i} \trans_{s_{\ell-1} s_\ell}$ for $j,i=1,...,n$ and observe that
	\begin{align*}
	&\prob{\rand Y_{0:n}=s_{0:n}}=\prob{\rand Y_{0:n}=s_{0:n}, n< \rand T}+\sum_{\ell=0}^n\prob{\rand Y_{0:n}=s_{0:n}, \rand T=\ell}\\
	&=\pcc_{s_{1:n}}\prob{\rand Z_{0:n}\neq s_{0:n}}
	+\sum_{\ell=0}^n\pcc_{s_{1:\ell}}\prob{\rand Z_\ell=s_\ell, \rand Z_{0:\ell-1}\neq s_{0:\ell-1}}\pcc_{s_{\ell+1:n}}=\pcc_{s_{1:n}}
	\end{align*}
	whereby we used that $\prob{\rand Z_{0:n}\neq s_{0:n}}+\sum_{\ell=0}^n\prob{\rand Z_\ell=s_\ell, \rand Z_{0:\ell-1}\neq s_{0:\ell-1}}=1$.
	This shows that $\rand Y$ is a Markov chain with transition kernel $\trans$ and initial state $s_0$.

	Altogether, we may state that 
	\begin{align*}
	(\trans^n)_{s_0s}&=\prob{\rand Y_n=s}=\prob{\rand Y_n=s, n\leq \rand  T}+\prob{\rand Y_n=s, n>\rand T }\\
	\pi_s&=\prob{\rand Z_n=s}=\prob{\rand Z_n=s, n\leq \rand T}+\prob{\rand Y_n=s, n>\rand T}
	\end{align*} 
	and
	$|(\trans^n)_{s_0s}-\pi_s|=|\prob{\rand Y_n=s, n\leq \rand T }- \prob{\rand Z_n=s, n\leq \rand T}|\limii{n}{\infty} 0$ for all $s_0\in\states$.
\end{proof}


Given a distribution $\pi$, MCMC methods seek an aperiodic transition kernel $\trans$ with invariant distribution $\pi$.
Thus, it is possible to sample approximately from $\pi$ by simulating a Markov chain with transition kernel $\trans$ until equilibrium is reached to a sufficient extend.
The last sample within this chain is then taken as a single approximate sample from $\pi$.
In particular, this procedure is independent of the state the Markov chain has started in.
The pace with which equilibrium is approached is referred to as the \emph{mixing time}.

Now, we consider a version of the well-known Perron-Frobenius theorem \citep{frobenius1912matrizen}.
It is usually stated in a more general context and corresponding proofs can be fairly complicated.
In turn, we provide our own convenient proof based on simple arithmetics and matrix algebra.

\begin{theorem}[Perron-Frobenius Theorem]
\label{lem:irredmat}
An irreducible transition kernel $\trans$ has a unique invariant distribution $\pi$.
\end{theorem}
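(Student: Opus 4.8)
The plan is to separate the statement into existence and uniqueness, and to notice that they rely on different hypotheses: an invariant distribution exists for \emph{every} finite stochastic matrix, so irreducibility is only needed to force uniqueness. For existence I would avoid fixed-point theorems and argue by Cesàro averaging against the compactness of the probability simplex. Fix any state, let $e$ be the corresponding point-mass (unit) row vector, and set $\mu_n^t=\tfrac1n\sum_{j=0}^{n-1}e^t\trans^j$. Each $\mu_n$ lies in the simplex $\{v\in[0,1]^k:\sum_{s}v_s=1\}$, which is closed and bounded, so by Bolzano--Weierstrass some subsequence $\mu_{n_i}$ converges to a limit $\mu$ still in the simplex. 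The telescoping identity $\mu_n^t\trans-\mu_n^t=\tfrac1n(e^t\trans^n-e^t)$ has entries bounded in absolute value by $2/n$, hence vanishes; passing to the limit along the subsequence gives $\mu^t\trans=\mu^t$, so $\mu$ is an invariant distribution.

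For uniqueness I would reduce to the convergence result already proved by passing to the lazy chain $\trans'\defeq\tfrac12(I+\trans)$. This $\trans'$ is again stochastic, and $\pi^t\trans'=\pi^t$ is equivalent to $\pi^t\trans=\pi^t$, so $\trans$ and $\trans'$ have exactly the same invariant distributions. Moreover $\trans'$ is aperiodic: expanding $(\trans')^n=2^{-n}\sum_{j=0}^n\binom{n}{j}\trans^j$ yields $\big((\trans')^n\big)_{zs}\ge 2^{-n}\binom{n}{m}(\trans^m)_{zs}$ for every $m\le n$, while irreducibility supplies, for each pair $(z,s)$, some $m_{zs}$ with $(\trans^{m_{zs}})_{zs}>0$; taking $N$ to be the maximum of the finitely many $m_{zs}$ makes $(\trans')^n$ strictly positive for all $n>N$.

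With this in hand, suppose $\pi$ and $\pi'$ are both invariant distributions of $\trans$. Each is then invariant for the aperiodic matrix $\trans'$, so Theorem~\ref{lem:invdisc} applied to $\trans'$ gives both $\lim_{n}\big((\trans')^n\big)_{zs}=\pi_s$ and $\lim_{n}\big((\trans')^n\big)_{zs}=\pi'_s$. The left-hand side does not depend on the chosen distribution, so $\pi_s=\pi'_s$ for every $s$, which together with the existence step proves the claim.

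I expect the aperiodicity of the lazy chain to be the main obstacle: the work is in upgrading the \emph{pointwise} reachability furnished by irreducibility (a possibly different exponent $m_{zs}$ for each pair) into a single threshold $N$ past which all entries are simultaneously positive. The binomial expansion is precisely what lets the self-loops in $\trans'$ propagate positivity from step $m$ to every later step, and the finiteness of \states is what permits the uniform maximum over pairs. The remaining ingredients, namely compactness of the simplex and the telescoping identity, are routine.
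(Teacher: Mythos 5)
Your proof is correct, but it takes a genuinely different route from the paper's. The paper argues entirely within linear algebra: it takes an arbitrary left $1$-eigenvector $x$ of \trans, shows by a summation identity that a mixture of positive and negative entries would force $\trans_{zs}=0$ for all $z$ in the positive block and $s$ outside it (contradicting irreducibility), and concludes that any left $1$-eigenvector is signed and hence normalisable to an invariant distribution; uniqueness then falls out of the same lemma in one line, since $\pi-\pi'$ would be a mixed-sign left $1$-eigenvector. You instead split the two halves along different hypotheses. Your existence argument via Ces\`{a}ro averages and Bolzano--Weierstrass is sound and actually proves more than the paper does: every finite stochastic matrix has an invariant distribution, with irreducibility playing no role, whereas the paper's existence step genuinely leans on irreducibility to exclude the mixed-sign case. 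Your uniqueness argument via the lazy chain $\trans'=\tfrac12(I+\trans)$ is also correct: the binomial expansion does upgrade the pointwise exponents $m_{zs}$ to a uniform threshold exactly as you say, $\trans'$ shares its invariant distributions with \trans, and Theorem~\ref{lem:invdisc} then pins down every invariant distribution as the common limit of $((\trans')^n)_{zs}$. What each approach buys: the paper's proof is self-contained, purely arithmetic, and delivers existence and uniqueness from a single sign lemma; yours imports compactness and reuses the coupling theorem, at the cost of self-containedness, but in exchange isolates the true logical dependencies (existence is free, irreducibility only buys uniqueness) and exhibits the lazy-chain construction, which is independently useful in MCMC practice for manufacturing aperiodicity.
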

\begin{proof}
Since any stochastic matrix has a right eigenvector with corresponding eigenvalue 1, it also has such a left eigenvector.
In particular, any such left 1-eigenvector exhibits non-zero elements.
Let $x$ be a left 1-eigenvector of \trans.
If $x$ has only non-negative or non-positive entries, we can immediately derive an invariant distribution $\pi$ of $\trans$ through normalizing $x$, i.e. $\pi=x\big\slash \sum_{s\in\states} x_s$.

Assume now that $x$ exhibits positive entries for $s\in \bar N$ and negative entries for $s\in N$.
The following applies
\begin{align}
\nonumber
&\sum_{z\in N}x_z \trans_{zs}+\sum_{z\in \bar N}x_z \trans_{zs}=x_s
\ \ \Rightarrow\ \ 
\sum_{z\in N}x_z \sum_{s\in\bar N}\trans_{zs}+\sum_{z\in \bar N}x_z \sum_{s\in \bar N}\trans_{zs}=\sum_{s\in \bar N}x_s\\
\label{eq:perr}
\Leftrightarrow\ \ &\underbrace{\sum_{z\in N}x_z \sum_{s\in\bar N}\trans_{zs}}_{\leq 0}
=\sum_{s\in \bar N}x_s-\sum_{z\in \bar N}x_z\sum_{s\in \bar N}\trans_{zs}
=\sum_{z\in \bar N}x_z\left(1-\sum_{s\in \bar N}\trans_{zs}\right)
=\underbrace{\sum_{z\in \bar N}x_z \sum_{s\not \in \bar N}\trans_{zs}}_{\geq 0}
\end{align}
Hence, the l.h.s and r.h.s. of (\ref{eq:perr}) have to be zero, which implies that $\trans_{zs}=0$ for all $z\in \bar N$ and $s\not\in \bar N$.
Consequently, $(\trans^n)_{zs}=0$ for all $z\in \bar N$, $s\not\in \bar N$ and $n\in\mathbb{N}$.

Since the existence of positive and negative entries implies reducibility, we conclude that irreducibility implies that any left 1-eigenvector has either solely non-positive or non-negative entries.
Thus, an irreducible transition kernel $\trans$ exhibits at least one invariant distribution $\pi$.

Finally, assume that there is a second invariant distribution $\pi'\neq \pi$.
In order to represent a distribution, not all components of $\pi$ can be either larger or smaller than the components of $\pi'$.
Thus, $\pi-\pi'$ must have positive and negative entries.
However, $\pi-\pi'$ is a left 1-eigenvector of $\trans$ and thus, $\trans$ can't be irreducible, which contradicts the existence of $\pi'$.
\end{proof}

The Perron-Frobenius theorem shows that invariant distributions can certainly be found for an abundance of stochastic matrices, especially for aperiodic ones.
In the context of MCMC, it is, however, only a nice-to-have result and not utterly necessary.
In fact, there is great freedom in choosing aperiodic transition kernels that exhibit a prespecified invariant distribution and each MCMC method provides its very own approach to do so.

\subsection{The Gibbs Sampler}

The Gibbs sampler \citep{Geman1984} is a primal MCMC sampling algorithm which is based on a decomposition of the objective distribution $\pi$ into conditional versions.
It is mainly used to sample from the joint distribution of a set of random variables.
Thereby each step may involve sampling from a single random variable given the remaining random variables conditioned on the last sample.

Assume that $\states=\mcal{V}^m$, with a finite space $\mcal{V}$ and let $\pi$ be a distribution over $\states$.
We proceed in a step wise manner.
In step $i$, given the sample $z\in\states$ from the last step $i-1$, we choose $j=(i\mod m)+1$ and sample from the transition kernel $\trans^j_z$ which is defined through
\begin{align}
\label{eq:gibbs}
\trans^j_{zs}=\frac{\ind{z_\ell=s_\ell \text{ for } \ell\neq j}\pi_s}{\sum_{s'\in\states}\ind{z_\ell=s'_\ell \text{ for } \ell\neq j}\pi_{s'}}
\end{align}
$\trans^j_{zs}$ is the conditional version of $\pi_s$ conditioned on $s_\ell=z_\ell$ for all $\ell$ except the $j$-th one.

We get that $\pi$ is an invariant distribution of $\trans^j$ since $\sum_{z\in\states}\pi_z\trans^j_{zs}=\sum_{z\in\states}\pi_{s}\trans^j_{sz}=\pi_{s}$.
However, a single Gibbs step is generally not aperiodic since it manipulates only one single coordinate.
This may be fixed by considering $m$ subsequent Gibbs steps as one transition within the Markov chain of the ensuing MCMC algorithm.

Unfortunately, the possible transitions are completely determined by $\pi$.
For example, assume that $\states=\{0,1\}^2$ and $\pi_{(0,0)}=\pi_{(1,1)}=0$.
In this case, we can never get from $(1,0)$ to $(0,1)$ and thus, the chain is not irreducible.
This poses one of several reasons why it can be important to be able to traverse the state space in a more arbitrary fashion.

A famous application of the Gibbs sampler is the Ising model \citep{Ising1925}.
There, $\states$ consists of the positive or negative states of the grid points of a finite grid, whereby independence is induced by spatial separation.
This yields very simple sampling steps, each conducted on a single grid point given all the other, but essentially only its neighboring grid points.
\cite{higdon1998} provides very vivid and more sophisticated treatments of the Ising model through Gibbs sampling.

\subsection{The Metropolis-Hastings algorithm}
\label{chap:mh}

The well-known Metropolis-Hastings algorithm is an MCMC sampler that traverses through the state space by means of a user defined proposal distribution.
Inherent for this sampler is that each proposed state undergoes an accept-reject step which decides whether the proposed state or the previous sample is chosen to be the next sample.
This step alone secures the required invariance and thus, gives the user great freedom in exploring the state space.
A primal version was first published in \cite{metropolis1953} and then extended in \cite{hastings1970}.

The user provides a transition kernel $\dist$ over $\states$ which is referred to as the \emph{proposal}.
In step $i$, given the previous sample $z$, a new state $s$ is proposed according to $\dist_z$ and accepted with probability 
\begin{align}
\label{eq:acc}
\acc_{zs}=\min\left\{1,\frac{\pi_{s}\dist_{sz}}{\pi_z\dist_{zs}}\right\}
\end{align}
whereby we agree that dividing by zero yields $\infty$.
The new sample is than either $s$ if accepted or $z$ otherwise.

The invariance can be shown by means of the following condition.
A transition kernel $\trans$ preserves the \emph{detailed balance} w.r.t. $\pi$ if $\pi_s\trans_{sz}=\pi_{z}\trans_{zs}$ for all $s,z\in\states$. 
Detailed balance implies invariance since $\sum_{z\in\states}\pi_z\trans_{zs}=\pi_{s}\sum_{z\in\states}\trans_{sz}=\pi_{s}$, but the opposite implication does not hold in general.

A Markov chain that preserves the detailed balance w.r.t. another distribution $\pi$ is said to be time-reversible in equilibrium or simply called \emph{reversible}.
That is, if $\rand X_n$ follows the law $\pi$, we get $\prob{\rand X_{n+1}=s\mid \rand X_{n}=z}=\prob{\rand X_n=z\mid \rand X_{n+1}=s}$.

The Metropolis-Hastings algorithm describes a reversible Markov chain.
This is trivial for transitions from $z$ to $s$ with $s=z$. For $s\neq z$, consider that
\begin{align*}
\pi_z \dist_{zs} \acc_{ zs}=\min\Big\{\pi_z\dist_{zs},\pi_{s}\dist_{sz}\Big\}=\pi_{s}  \dist_{sz}\acc_{sz}
\end{align*}

The aperiodicity of the resulting Markov chain has to be met jointly by the acceptance probability and the proposal.
Equation (\ref{eq:acc}) shows that a non-trivial transition from $z$ to $s$ can only take place if the corresponding backward transition is also feasible, i.e. if $\pi_{s}\dist_{sz}>0$.
In the most extreme case, this could mean that we apply an irreducible and aperiodic proposal, but the resulting Metropolis-Hastings Markov chain will never move away from the initial state and is thus not irreducible.

Apart from the ability to traverse the state space in a user defined fashion, a very important advantage of this sampling algorithm is that normalization constants w.r.t. $\pi$ cancel out in (\ref{eq:acc}).
Thus, we may sample from conditional versions of $\pi$ by using $\pi$ directly in the acceptance probability.
This enables convenient data-driven inference approaches by means of carefully designed and complex statistical models, without the need to compute cumbersome marginalizations over unobserved states.

\bibliographystyle{chicago}      
\bibliography{../bibtex}   


\end{document}